\newtheorem{Theorem}{Theorem}[section]
\newtheorem{prop}[Theorem]{Proposition}
\newtheorem{Lemma}[Theorem]{Lemma}
\newtheorem{Corollary}[Theorem]{Corollary}
\def\beq#1#2\eeq{%
        \begin{equation}%
        \label{#1}%
            #2%
        \end{equation}%
    }
\title[Markov fractions and slopes]{Markov fractions and the slopes of the exceptional bundles on $\mathbb P^2$}
\author{A.P. Veselov}
\address{Department of Mathematical Sciences,
Loughborough University, Loughborough LE11 3TU, UK}
\email{A.P.Veselov@lboro.ac.uk}
\begin{document}

\maketitle

\begin{abstract}
We show that the Markov fractions introduced recently by Boris Springborn are precisely the slopes of the exceptional vector bundles on $\mathbb P^2$ studied in 1980s by Dr\`ezet and Le Potier and by Rudakov. In particular, we provide a simpler proof of Rudakov's result claiming that the ranks of the exceptional bundles on $\mathbb P^2$ are Markov numbers.

\end{abstract}

\section{Introduction}

In 1980s Dr\`ezet and Le Potier \cite{DLP} and Rudakov \cite{Rud} studied the {\it exceptional vector bundles} $E$ on $\mathbb P^2$ which are the stable vector bundles, which are rigid in the sense that $Ext^1(E,E)=0.$ In particular, it was proved that the rank $r(E)$ of such bundle must be a Markov number and the corresponding {\it slope} $\mu(E)$ defined as
$$
\mu(E)=\frac{c_1(E)}{r(E)},
$$ 
where $c_1(E) \in H^2(\mathbb P^2) \cong \mathbb Z$ is the first Chern class of $E$, determines such bundle uniquely.
Recall that the {\it Markov numbers} 
$$
1, 2, 5, 13, 29, 34, 89, 169, 194, 233, 433, 610, 985, \dots
$$
are parts of {\it Markov triples}, which are the positive integer solutions of the Markov equation
$$x^2 + y^2 + z^2=3xyz.$$ Markov \cite{Markov} showed that all such solutions can be found recursively from $(1,1,1)$ by Vieta involutions and permutations.

Markov triples were introduced by Markov in 1880 in relation with the number theory of the binary quadratic forms (see \cite{Cassels}), but in the last decades they became of significance in other parts of mathematics, including the theory of Frobenius manifolds and algebraic geometry \cite{D1,HP}.

Recently Boris Springborn \cite{Springborn} introduced the notion of the {\it Markov fractions} $\frac{p}{q}$, which are also parts of special triples of rationals with the denominators being Markov numbers:
$$
\frac{0}{1}, \frac{1}{2}, \frac{2}{5}, \frac{5}{13}, \frac{12}{29}, \frac{13}{34}, \frac{34}{89}, \frac{70}{169}, \frac{75}{194}, \frac{89}{233}, \frac{179}{433}, \frac{233}{610},  \frac{408}{985},  \dots 
$$
He proved that these fractions have remarkable Diophantine properties, being together with their companions the worst approximable rational numbers.
He also reveals deep connection with the hyperbolic geometry known for the Markov numbers since the work of Gorshkov \cite{Gorshkov} and Cohn \cite{Cohn}.

The main result of this paper is that the Markov fractions are precisely all possible slopes of the exceptional vector bundles on $\mathbb P^2.$
The proof can essentially be extracted from Rudakov \cite{Rud}, but we derive it directly from the results of Dr\`ezet and Le Potier \cite{DLP}, who described all the exceptional slopes as the values of certain function $\epsilon(x)$ defined on the dyadic rationals.
In particular, we provide a simpler proof of Rudakov's result claiming that the ranks of the exceptional bundles on $\mathbb P^2$ are Markov numbers.

\section{Markov fractions}

In this section we mainly follow Springborn \cite{Springborn}, who proposed two equivalent definitions of Markov fractions.

We choose the one using the modification of the {\it Farey tree}, which is the projective version of the {\it Conway topograph} introduced by John H. Conway \cite{Conway}. 
The Conway topograph consists of the planar domains which are connected components of the complement to the trivalent  tree imbedded in the plane. These domains were originally labelled by the superbases in the integer lattice $\mathbb Z^2$, but can be equivalently labelled by the rationals using Farey mediant $\frac{a_1}{b_1}\oplus \frac{a_2}{b_2}=\frac{a_1+a_2}{b_1+b_2}$. We will be interested only in rationals $a/b \in [0,1]$ occupying part of the Conway topograph shown on Fig. 1.

\begin{figure}[h]
\begin{center}
\includegraphics[width=92mm]{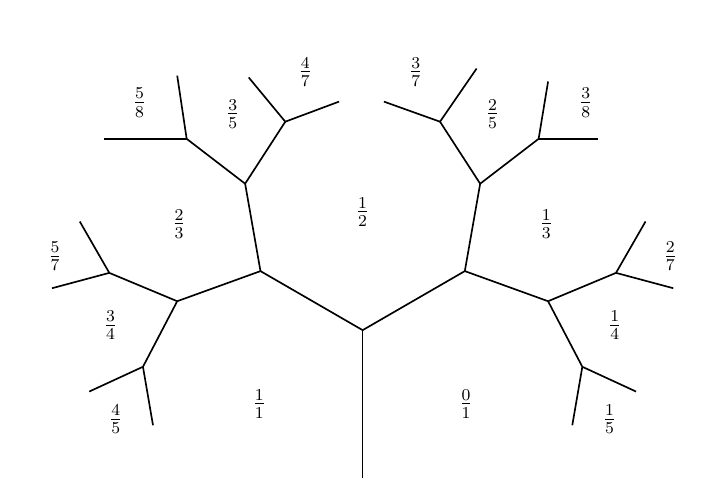} \, \includegraphics[width=30mm]{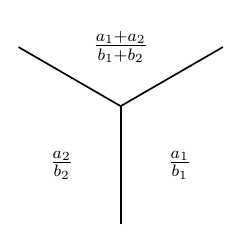}\,
\caption{\small The Farey tree of fractions between 0 and 1.}
\end{center}
\end{figure}

The {\it Markov fraction tree} is the modification of this Farey tree, where the Farey mediant is replaced by the {\it Springborn mediant} 
\beq{Sm}
\frac{p_1}{q_1}*\frac{p_2}{q_2}=\frac{p_1q_1+p_2q_2}{q_1^2+q_2^2},\eeq 
or, in the reduced form,
\beq{mfrac}
\frac{p_1}{q_1}*\frac{p_2}{q_2}=\frac{p}{q}, \quad p=\frac{p_1q_1+p_2q_2}{p_2q_1-p_1q_2}, \,\,\, q=\frac {q_1^2+q_2^2}{p_2q_1-p_1q_2}.
\eeq

\begin{figure}[h]
\begin{center}
\includegraphics[width=92mm]{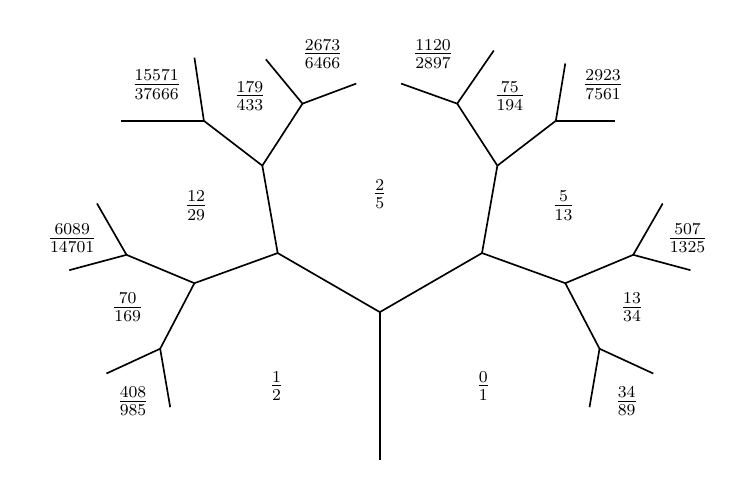}\, \includegraphics[width=30mm]{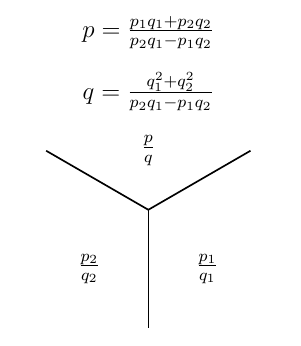}
\caption{\small The Markov fraction tree with the Springborn local rule.}
\end{center}
\end{figure}

By definition, the {\it Markov fractions} between $0$ and $1/2$ are defined recursively using the rule (\ref{mfrac}) starting from the fractions $\frac{0}{1}$ and $\frac{1}{2}$ (see Fig.2).
The set of such fractions we denote as $\mathcal {MF}_R$.

Juxtaposition with the Farey tree establishes the bijection \cite{Springborn}
\beq{bij}
\mu: \mathbb Q\cap [0,1] \to \mathcal {MF}_R,
\eeq 
which is a version of Frobenius parametrisation of Markov numbers \cite{Frobenius}.
By definition the function $\mu(x)$ satisfies the property
\beq{muprop}
\mu\left(\frac{a}{b}\oplus\frac{c}{d}\right)=\mu\left(\frac{a}{b}\right)*\mu\left(\frac{c}{d}\right),\quad |ad-bc|=1,
\eeq
intertwining Farey and Sprinborn mediants of neighbours on the Farey tree.
The set of all Markov fractions is defined as 
\beq{MF}
\mathcal {MF}:=\{n\pm \frac{p}{q}, \quad  \frac{p}{q}\in \mathcal {MF}_R,\,\,  n \in \mathbb Z\}. 
\eeq
The reduced set $\mathcal {MF}_R=\mathcal {MF}\cap [0,1/2]$ is a fundamental domain of the natural action on $\mathcal {MF}$ of the integer affine group $\mathit{Aff}_1(\mathbb Z)$.


Springborn proved that the Markov fractions $\frac{p}{q}$ together with their properly defined companions are the worst approximable rationals having the corresponding {\it approximation constants} $C\left(\frac{p}{q}\right) \geq \frac{1}{3}$, where
$$
C\left(\frac{p}{q}\right):=\inf_{\frac{a}{b} \in \mathbb Q \setminus \bigl\{\frac{p}{q} \bigr\}} b^2 \left|\frac{p}{q}-\frac{a}{b}\right|
$$
(in a different form some related results were found earlier by Gbur \cite{Gbur}).

Note that Markov \cite{Markov} studied the quadratic forms corresponding to the irrational numbers $\alpha$ with the {\it Lagrange number} $L(\alpha)<3$, 
where $$
L(\alpha)^{-1}:=\liminf\limits_{b \rightarrow \infty} \left(b^2 \min_{a \in \mathbb Z} \left|\alpha-\frac{a}{b}\right|\right).
$$
(see  \cite{Aigner} and \cite{Cassels} for the details). 

Springborn also proved that Markov fractions $x=\frac{p}{q}$ can be characterised geometrically by the property that the vertical geodesic joining $x$ and infinity on the upper half-plane $\mathcal H^2$ projects to a simple geodesic on the modular torus with both ends in the cusp (see Corollary 4.3 in \cite{Springborn}). 

We start with the proof that, in contrast to the Farey mediants, the Springborn mediants (\ref{Sm}) are highly reducible, and that their reduced form is indeed given by formula (\ref{mfrac}).

\begin{prop}
The numbers $p,q$, which are defined recursively on the Conway topograph by the Springborn local rule (\ref{mfrac}) with the initial data $\frac{0}{1}$ and $\frac{1}{2},$ are coprime integers. 
The denominators $q$ of Markov fractions are the corresponding Markov numbers.
\end{prop}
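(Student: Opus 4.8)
The plan is to prove everything at once by induction along the recursive construction of the Markov fraction tree, carrying an invariant strong enough to see simultaneously the integrality, the coprimality, the value of the ``divisor'' $p_2q_1-p_1q_2$ appearing in (\ref{mfrac}), and the Markov equation. Precisely, for every pair $\frac{p_1}{q_1},\frac{p_2}{q_2}$ of Markov fractions occupying two adjacent domains produced at some stage of the construction, put $d:=|p_2q_1-p_1q_2|$ and claim: $p_1,p_2,q_1,q_2$ are non-negative integers, $\gcd(p_i,q_i)=1$, $q_i\ge 1$, $\gcd(q_1,q_2)=1$, and $(q_1,q_2,d)$ is a Markov triple (in particular pairwise coprime). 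The base case is the starting edge, where $(p_1,q_1)=(0,1)$, $(p_2,q_2)=(1,2)$, $d=1$, and $(1,2,1)$ solves $x^2+y^2+z^2=3xyz$.

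For the inductive step, take an adjacent pair $\frac{p_1}{q_1},\frac{p_2}{q_2}$ satisfying the invariant and form its Springborn mediant $\frac pq=\frac{p_1}{q_1}*\frac{p_2}{q_2}=\frac{N}{D}$, $N=p_1q_1+p_2q_2$, $D=q_1^2+q_2^2$. One checks directly the two identities
$$q_1N-p_1D=q_2(p_2q_1-p_1q_2),\qquad q_2N-p_2D=-q_1(p_2q_1-p_1q_2),$$
from which $\gcd(N,D)$ divides $q_1(p_2q_1-p_1q_2)$ and $q_2(p_2q_1-p_1q_2)$, hence divides $d$ since $\gcd(q_1,q_2)=1$. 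Conversely, the Markov equation for $(q_1,q_2,d)$ rewrites as $D=q_1^2+q_2^2=d(3q_1q_2-d)$, so $d\mid D$; and then $q_1N=p_1D+q_2(p_2q_1-p_1q_2)$ is divisible by $d$, whence $d\mid N$ because $\gcd(d,q_1)=1$. Therefore $\gcd(N,D)=d$, the reduced fraction is exactly (\ref{mfrac}) with $p=N/d$, $q=D/d$ non-negative coprime integers and $q\ge1$; moreover $q=3q_1q_2-d$, so $(q_1,q_2,q)$ is the Vieta neighbour of $(q_1,q_2,d)$ and is again a Markov triple.

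Inserting $\frac pq$ creates exactly two new adjacent pairs, $\frac{p_1}{q_1},\frac pq$ and $\frac pq,\frac{p_2}{q_2}$, and the invariant must be checked for each. The point is the computation $p_1q-pq_1=\dfrac{p_1D-q_1N}{d}=\dfrac{q_2(p_1q_2-p_2q_1)}{d}=\pm q_2$, which shows that the divisor attached to the pair $\frac{p_1}{q_1},\frac pq$ equals $q_2$, and symmetrically the one attached to $\frac pq,\frac{p_2}{q_2}$ equals $q_1$; since $(q_1,q_2,q)$ has already been shown to be a Markov triple and pairwise coprime, the invariant for both new pairs follows at once. Every adjacent pair arising in the construction is either the starting edge or one of the two pairs produced by such an insertion, so the induction terminates: all numerators and denominators are coprime integers, and each denominator sits in a Markov triple, hence is a Markov number. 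Conversely the recursion on triples $(q_1,q_2)\mapsto(q_1,q_2,3q_1q_2-d)$ is Markov's own tree of triples grown from $(1,2,5)$, and the remaining Markov numbers $1,2$ already occur at the two seeds, so every Markov number is realised as a denominator; this is the Frobenius-type parametrisation of the statement.

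The main obstacle is guessing the invariant, above all recognising that $p_2q_1-p_1q_2$ is forced to be the third entry of a Markov triple $(q_1,q_2,d)$ --- it is visibly not $q_1q_2$ and not, in any obvious sense, a denominator already present --- together with the dual role this entry plays: its Markov equation delivers $d\mid D$, the coprimality inside that triple delivers $d\mid N$, and passing to the Vieta neighbour in the $d$-slot produces precisely the denominator $q$ of the new Markov fraction. Once that is in place the rest is bookkeeping with the two displayed identities.
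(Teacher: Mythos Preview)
Your proof is correct and takes essentially the same route as the paper's: both hinge on recognising that the divisor $|p_2q_1-p_1q_2|$ is the third entry of the Markov triple containing $q_1,q_2$ (this is exactly the paper's relation (\ref{rel3})), then obtain integrality of $q$ from the Vieta form $q=3q_1q_2-d$, integrality of $p$ from coprimality inside Markov triples, and the cross-relations $pq_i-p_iq=\pm q_{3-i}$ (your displayed identities are the paper's (\ref{rel12})) to close the induction. Your packaging as a single edge-indexed invariant is somewhat cleaner than the paper's vertex-based Lemma~2.2, but the mathematical content is identical.
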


\begin{proof}
As we have already mentioned, Markov \cite{Markov} proved that all the positive integer solutions of the Markov equation
\beq{Markov}
q_1^2+q_2^2+q_3^2=3q_1q_2q_3
\eeq
can be found from the obvious solution $(1,1,1)$ by applying permutations and Vieta involution $(q_1,q_2,q_3)\to (q_1,q_2,q_3')$ where
\beq{Vieta}
q_3'=3q_1q_2-q_3=\frac{q_1^2+q_2^2}{q_3}.
\eeq
This can be used to compute all Markov numbers recursively on the Conway topograph \cite{BV, Fock} (see Fig. 3).

\begin{figure}[h]
\begin{center}
\includegraphics[width=77mm]{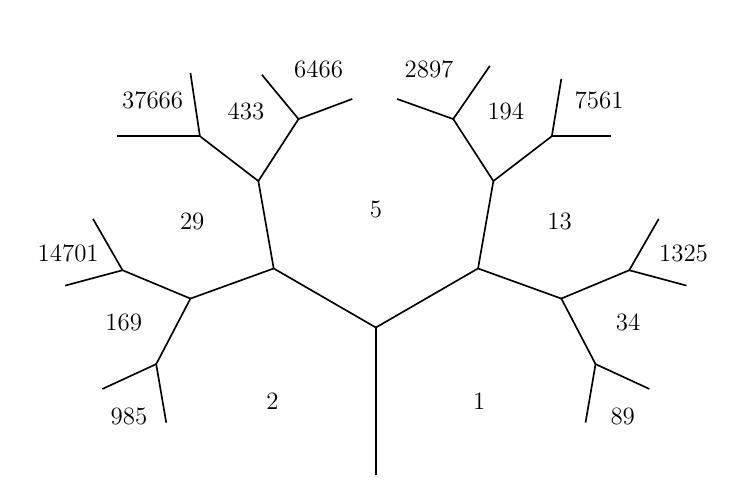}\, \includegraphics[width=40mm]{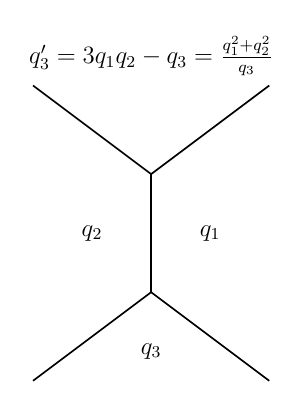}
\caption{\small Markov numbers on the Conway topograph.}
\end{center}
\end{figure}

The numbers in three domains meeting at one vertex form Markov triples (the singular Markov triples (1,1,1) and (1,1,2) are left outside the chosen part of the full Conway topograph).

Consider a part of the Markov fraction tree shown on Fig. 4.

\begin{figure}[h]
\begin{center}
\includegraphics[width=60mm]{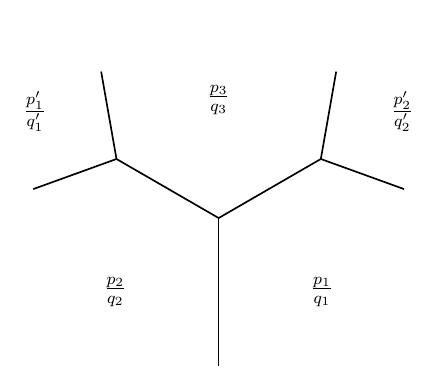} 
\caption{}
\end{center}
\end{figure}

\begin{Lemma}
The numbers on a part of the Markov fraction tree shown on Fig. 4 satisfy the following relations:
\beq{rel12}
p_2q_3-p_3q_2=q_1,\quad
p_3q_1-p_1q_3=q_2,
\eeq
\beq{rel3}
p_2q_1-p_1q_2=\frac{q_1^2+q_2^2}{q_3}=3q_1q_2-q_3,
\eeq
\beq{rel1'}
p_1'=\frac{p_2q_2+p_3q_3}{q_1}, \quad q_1'=\frac{q_2^2+q_3^2}{q_1},
\eeq
\beq{rel2'}
p_2'=\frac{p_1q_1+p_3q_3}{q_2}, \quad q_2'=\frac{q_1^2+q_3^2}{q_2}.
\eeq
\end{Lemma}

\begin{proof} The relation (\ref{rel12}) follows from the direct calculation:
$$
p_2q_3-p_3q_2=\frac{p_2(q_1^2+q_2^2)}{p_2q_1-p_1q_2}-\frac{q_2(p_1q_1+p_2q_2)}{p_2q_1-p_1q_2}=\frac{p_2q_1^2-p_1q_1q_2}{p_2q_1-p_1q_2}=q_1.
$$
The last two relations are now obvious:
$$
p_1'=\frac{p_2q_2+p_3q_3}{p_2q_3-p_3q_2}=\frac{p_2q_2+p_3q_3}{q_1}, \quad q_1'=q_1'=\frac{q_2^2+q_3^2}{p_2q_3-p_3q_2}=\frac{q_2^2+q_3^2}{q_1}
$$
(and similarly for (\ref{rel2'})). Comparing (\ref{rel1'}) with (\ref{Vieta}) we see that the denominators of Markov fractions are Markov numbers.
The relation (\ref{rel3}) now follows from the construction of the tree. This proves the lemma.
\end{proof}


Note that the integrality of $q_1'$ follows from the second Vieta relation for the Markov equation:
$$
q_1'=\frac{q_2^2+q_3^2}{q_1}=3q_2q_3-q_1.
$$
From this relation it follows also that all Markov numbers from the same Markov triple are pairwise coprime since this is true for the seed $(1,1,1).$

To prove the integrality of $p_1'$ consider the product
$$
q_2(p_2q_2+p_3q_3)=p_2q_2^2+p_3q_2q_3=p_2q_2^2+(p_2q_3-q_1)q_3=p_2(q_2^2+q_3^2)-q_1q_3
$$
$$
=p_2q_1q_1'-q_1q_3=q_1(p_2q_1'-q_3).
$$
Thus $q_1$ divides $q_2(p_2q_2+p_3q_3)$ and since $q_1$ and $q_2$ are coprime, $q_1$ divides $p_2q_2+p_3q_3$, which implies the integrality of $p_1'.$

The fact that $p_3$ and $q_3$ are coprime follows from the relations (\ref{rel12}) since $q_1$ and $q_2$ are coprime.
\end{proof}

\begin{prop}
For any Markov fraction $\frac{p}{q}$ the numerator $p$ satisfies the quadratic congruence
\beq{qcong}
p^2+1\equiv 0 \,(mod \,\, q).
\eeq
\end{prop}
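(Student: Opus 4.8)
The plan is to read off the congruence (\ref{qcong}) directly from the relations (\ref{rel12}) established in the Lemma, together with the fact (noted above) that the members of a Markov triple are pairwise coprime. First, it is enough to treat the fractions in $\mathcal{MF}_R$: a general Markov fraction $n\pm\frac pq$ has numerator $nq\pm p$, and $(nq\pm p)^2+1\equiv p^2+1\pmod q$, so the general case reduces to the case $\frac pq\in\mathcal{MF}_R$.

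I would dispose of the two seed fractions $\frac01$ and $\frac12$ by direct inspection, $0^2+1\equiv 0\pmod 1$ and $1^2+1\equiv0\pmod2$. For every other Markov fraction $\frac pq$ (so $q>2$), the recursive construction presents $\frac pq$ as the Springborn mediant $\frac{p_1}{q_1}*\frac{p_2}{q_2}$ of its two Farey parents, and the regions labelled $\frac{p_1}{q_1},\frac{p_2}{q_2},\frac pq$ meet at a vertex of the Conway topograph. This is exactly the local picture of Fig.~4 with $\frac{p_3}{q_3}=\frac pq$, so the relations (\ref{rel12}) apply in the form
$$
p_2q-p\,q_2=q_1,\qquad p\,q_1-p_1q=q_2 .
$$

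Reducing both equalities modulo $q$ gives $p\,q_2\equiv -q_1$ and $p\,q_1\equiv q_2\pmod q$. Multiplying these two congruences and using that $q_1$ and $q_2$ are each coprime to $q$, I can cancel $q_1q_2$ to obtain $p^2\equiv -1\pmod q$, i.e. $p^2+1\equiv0\pmod q$, which is (\ref{qcong}).

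I do not expect a genuine obstacle, since the arithmetic heart of the matter has already been done in the Lemma; the only point deserving a moment's care is the bookkeeping that every non-seed Markov fraction really does occur as the child fraction $\frac{p_3}{q_3}$ in some copy of Fig.~4, which is immediate from the definition of the Markov fraction tree. (Equivalently, one could instead reduce the Markov equation $q_1^2+q_2^2+q^2=3q_1q_2q$ modulo $q$ to get $q_1^2\equiv -q_2^2\pmod q$, and combine it with the single relation $p\,q_1\equiv q_2\pmod q$ to reach the same conclusion; the two arguments are the same up to rearrangement.)
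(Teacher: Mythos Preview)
Your argument is correct. Both proofs draw on the Lemma and on the pairwise coprimality of the members of a Markov triple, but the algebra is arranged differently. The paper works with $p_1'/q_1'$: writing $p_1'=(p_2q_2+p_3q_3)/q_1$ from (\ref{rel1'}) and invoking the Diophantus identity $(p_2q_2+p_3q_3)^2+(p_2q_3-p_3q_2)^2=(p_2^2+p_3^2)(q_2^2+q_3^2)$ together with $p_2q_3-p_3q_2=q_1$ and $q_2^2+q_3^2=q_1q_1'$, it obtains the exact value $(p_1')^2+1=(p_2^2+p_3^2)\,q_1'/q_1$, and divisibility by $q_1'$ follows after cancelling $q_1^2$. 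You instead view the new fraction as $p_3/q_3$ and simply reduce the two relations (\ref{rel12}) modulo $q_3$ to get $p_3q_1\equiv q_2$ and $p_3q_2\equiv-q_1$; multiplying and cancelling $q_1q_2$ gives $p_3^2\equiv-1$ at once. Your route is a shade more elementary, needing no quadratic identity, while the paper's computation produces the explicit integer value of $p^2+1$ as a by-product. You also spell out the reduction from general Markov fractions to $\mathcal{MF}_R$, which the paper leaves implicit.
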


\begin{proof}
To prove this for $p=p_1', q=q_1'$ consider
$$
(p_1')^2+1=\frac{(p_2q_2+p_3q_3)^2}{(p_2q_3-p_3q_2)^2}+1=\frac{(p_2^2+p_3^2)(q_2^2+q_3^2)}{(p_2q_3-p_3q_2)^2}=\frac{(p_2^2+p_3^2)(q_2^2+q_3^2)}{q_1^2}.
$$
Note that
$$
(p_2^2+p_3^2)(q_2^2+q_3^2)=(p_2^2+p_3^2)q_1q_1' \equiv 0 \,(mod \,\, q_1').
$$
Since $q_1$ and $q_1'$ are coprime, we can divide this congruence by $q_1^2$ to get the congruence (\ref{qcong}).
\end{proof}

Consider now some special branches of Markov fraction tree. 

The bottom right branch
$$
\frac{1}{2},\, \frac{2}{5}, \,\frac{5}{13}, \,\frac{13}{34}, \, \frac{34}{89}, \,  \frac{89}{233}\dots
$$
consists of the fractions $\frac{p_k}{q_k}$ by construction satisfying the recurrence
$$
p_{k+1}=\frac{p_kq_k+0}{p_k}=q_k, \quad q_{k+1}=\frac{q_k^2+1}{p_k}=\frac{q_k^2+1}{q_{k-1}}.
$$
The corresponding Markov numbers $q_k=F_{2k}$ are known to be every second Fibonacci number (see e.g. \cite{Aigner})
$$
F_k={\color{blue}1}, 1, {\color{blue}2}, 3, {\color{blue}5}, 8, {\color{blue}13}, 21, {\color{blue}34}, 55, {\color{blue}89}, 144, {\color{blue}233}, \dots,
$$ 
so the corresponding Markov fractions are $F_{2k}/F_{2k+2}.$

To describe the bottom left branch 
$$
 \frac{2}{5}, \,\frac{12}{29}, \,\frac{70}{169}, \, \frac{408}{985}, \,  \frac{89}{233}\dots
$$
define {\it Pell numbers} $(x_n, y_n)$ as the positive solutions of Pell's equations
$$x^2-2y^2=(-1)^n:$$
$$
(1,1), (3,2), (7,5), 17,12), (41,29), (99,70), (239,169), (577, 408), (1393,985), \dots
$$
From the theory of Pell equation (see e.g. \cite{LeVeque}) these numbers can be found recursively:
$$
x_{n+1}=2x_n+x_{n-1}, \quad x_1=1, x_2=3
$$
$$
y_{n+1}=2y_n+y_{n-1}, \quad y_1=1, y_2=2.
$$

\begin{prop}
Bottom left branch of Markov fraction tree consists of the fractions
\beq{pell}
\frac{p_k}{q_k}=\frac{y_{2k}}{y_{2k+1}},
\eeq
where $y_n$ are the Pell numbers.
\end{prop}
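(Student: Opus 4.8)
The plan is to first read off the recursion that produces the bottom left branch, and then to run an induction on $k$ whose only real content is a pair of elementary identities for the Pell numbers.

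By construction of the Markov fraction tree, the bottom left branch is obtained from $\frac{2}{5}$ by repeatedly forming the Springborn mediant with the fraction $\frac{1}{2}$, which stays as the right neighbour of every fraction on the branch. In the notation of Fig.~4 and the Lemma this means: at the $k$-th level the parents are $\frac{p_{k-1}}{q_{k-1}}$ and $\frac{1}{2}$ (so the triple of denominators $q_1,q_2,q_3$ there is $q_{k-1},\,2,\,q_k$), their mediant is $\frac{p_k}{q_k}$, and the next fraction $\frac{p_{k+1}}{q_{k+1}}$ is the fraction $\frac{p_1'}{q_1'}$ of relation (\ref{rel1'}). Hence
$$
p_{k+1}=\frac{p_kq_k+2}{q_{k-1}},\qquad q_{k+1}=\frac{q_k^2+4}{q_{k-1}},
$$
which is the exact analogue --- with $\frac{1}{2}$ in place of $\frac{0}{1}$ --- of the recursion used above for the bottom right branch, the seed being $\frac{p_0}{q_0}=\frac{0}{1}$ and $\frac{p_1}{q_1}=\frac{0}{1}*\frac{1}{2}=\frac{2}{5}$.

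Now I would prove $p_k=y_{2k}$, $q_k=y_{2k+1}$ by induction on $k\ge 0$, extending the Pell numbers by $y_0=0$. The base cases $k=0,1$ read $\frac{p_0}{q_0}=\frac{y_0}{y_1}=\frac{0}{1}$ and $\frac{p_1}{q_1}=\frac{y_2}{y_3}=\frac{2}{5}$. Assuming the statement for $k-1$ and $k$ (with $k\ge 1$), the Pell recurrence $y_{n+1}=2y_n+y_{n-1}$ gives $q_{k-1}=y_{2k-1}$, so the inductive step reduces to the two identities
$$
y_{2k+1}^2+4=y_{2k-1}\,y_{2k+3},\qquad y_{2k}\,y_{2k+1}+2=y_{2k-1}\,y_{2k+2};
$$
granted these, $q_{k+1}=y_{2k+3}$ and $p_{k+1}=y_{2k+2}$, which is (\ref{pell}) at level $k+1$.

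These two Pell identities carry the only real content of the proof, and verifying them is the step I would flag as the crux. I would do both at once from the Binet-type formula $y_n=\frac{\alpha^n-\beta^n}{\alpha-\beta}$ with $\alpha=1+\sqrt 2$, $\beta=1-\sqrt 2$ (so $\alpha\beta=-1$, $\alpha+\beta=2$, $(\alpha-\beta)^2=8$): after expanding each product the bulk terms $\alpha^{4k+j}+\beta^{4k+j}$ agree on the two sides and one is left comparing the small constants $\alpha^i+\beta^i$ for $1\le i\le 4$, namely $2,6,14,34$, which produces exactly the stated $+4$ and $+2$. A more structural alternative, closer in spirit to the rest of the paper, is to note that the first identity is precisely the second Vieta relation (\ref{Vieta}) for the Markov triples $(y_{2k-1},2,y_{2k+1})$ --- these are the triples $(1,2,5),(5,2,29),(29,2,169),(169,2,985),\dots$ lying along this branch --- while the second identity follows in the same way from the numerator relation (\ref{rel1'}); that the $(y_{2k-1},2,y_{2k+1})$ are consecutive Markov triples is a one-line induction using $y_{2k+3}=6\,y_{2k+1}-y_{2k-1}$. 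Apart from these identities the proof is pure bookkeeping, and the only point needing genuine care is matching the indexing of the Pell numbers to the levels of the tree and keeping straight which endpoint, $\frac{0}{1}$ or $\frac{1}{2}$, generates which of the two special branches.
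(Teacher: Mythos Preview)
Your proposal is correct and follows the same inductive skeleton as the paper: derive the recursion $p_{k+1}=(p_kq_k+2)/q_{k-1}$ along the branch, then reduce the induction step to a Pell identity. There are two genuine differences worth noting. First, the paper simply cites Aigner for $q_k=y_{2k+1}$ and only proves $p_k=y_{2k}$; you instead also derive the denominator recursion $q_{k+1}=(q_k^2+4)/q_{k-1}$ and prove both claims simultaneously, which is more self-contained. Second, for the key identity $y_{2k}y_{2k+1}+2=y_{2k-1}y_{2k+2}$ the paper gives a slick telescoping argument: setting $W_j:=y_{j+2}y_{j-1}-y_jy_{j+1}$, one line with the Pell recurrence shows $W_j+W_{j-1}=0$, so $W_{2k}$ is constant and equals $W_2=2$. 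This is somewhat cleaner than your Binet computation (no irrationals, no case-by-case check of the constants $2,6,14,34$), and you might prefer it.

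One small caution: your ``structural alternative'' is fine for the first identity --- it really is the Vieta relation for the triples $(2,y_{2k-1},y_{2k+1})$, and $y_{2k+3}=6y_{2k+1}-y_{2k-1}$ does the induction --- but your remark that the second identity ``follows in the same way from the numerator relation (\ref{rel1'})'' is circular as stated: (\ref{rel1'}) \emph{is} the recursion you are trying to verify that $y_{2k}$ satisfies. If you want a structural shortcut for the numerator, use relation (\ref{rel12}) instead: with $(p_2/q_2,p_3/q_3)=(1/2,\,p_k/q_k)$ and $q_1=q_{k-1}$ it reads $q_k-2p_k=q_{k-1}$, i.e.\ $p_k=(y_{2k+1}-y_{2k-1})/2=y_{2k}$ directly from the Pell recurrence, and no second identity is needed at all.
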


\begin{proof}
The corresponding Markov numbers $q_k$ are known to be the Pell numbers $y_{2k+1}$ \cite{Aigner}, so we need only to prove that $p_k=y_{2k}$.
We prove this by induction. This is clearly true for $k=1.$ Assume now that the relation (\ref{pell}) holds for $k\leq n$, so that
$$
p_{k+1}=\frac{p_kq_k+2}{q_{k-1}}=\frac{y_{2k}y_{2k+1}+2}{y_{2k-1}}.
$$
To prove that $\frac{y_{2k}y_{2k+1}+2}{y_{2k-1}}=y_{2k+2}$ we need to show that $W_{2k}=2$, where
$
W_j:=y_{j+2}y_{j-1}-y_j y_{j+1}.
$
We have
$$
W_j+W_{j-1}=y_{j+2}y_{j-1}-y_j y_{j+1}+y_{j+1}y_{j-2}-y_{j-1} y_{j}
$$
$$=y_{j-1}(y_{j+2}-y_j)-y_{j+1}(y_{j}-y_{j-2})=2y_{j-1}y_{j+1}-2y_{j+1}y_{j-1}=0,
$$
This means that $W_j=-W_{j-1}$, so $W_{2k}=W_{2k-2}$ does not depend on $k$ and from the initial data we have $W_{2k}= 2$.
\end{proof}

\section{Exceptional slopes and Markov fractions}

For the general theory of the algebraic vector bundles on $\mathbb P^2$ we refer to Le Potier \cite{LePotier}.

Let $E$ be an algebraic vector bundle on complex projective plane $\mathbb P^2$ of rank $r$ with the Chern classes $c_1$ and $c_2.$ 
Since $H^2(\mathbb P^2)\cong \mathbb Z$ and $H^4(\mathbb P^2)\cong \mathbb Z$ we can consider $c_1$ and $c_2$ as integers.
The ratio 
$$
\mu(E):=\frac{c_1}{r}
$$
is called {\it slope}. 
The algebraic vector bundle $E$ (which can be considered as a locally-free sheaf) is called {\it stable} if for any proper sub-sheaf $F \subset E$ we have
$$
\mu(F)<\mu(E)
$$
and {\it rigid} if $Ext^1(E,E)=0.$ The vector bundles, which are both stable and rigid, are called {\it exceptional.} Alternatively, the exceptional bundles can be defined by the conditions
$$
Hom(E,E) = \mathbb{C},  \,\,  Ext^i(E,E) = 0,  i > 0.
$$

It is known \cite{DLP} that the exceptional vector bundles $E$ on $\mathbb P^2$ are uniquely determines by the slope $\mu(E)$,
so the main question is to describe the set $\mathfrak E$ of fractions $\frac{p}{q}$ for which there exists an exceptional bundle $E$ with the slope $\mu(E)=\frac{p}{q}.$ 

The main result of this paper is the following

\begin{Theorem}
The set $\mathfrak E$ of slopes of the exceptional bundles on $\mathbb P^2$ coincides with the set of all Markov fractions.
\end{Theorem}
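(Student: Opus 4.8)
The plan is to match the Drézet--Le Potier parametrisation of $\mathfrak E$ by dyadic rationals with the recursive definition of $\mathcal{MF}$, by checking that the two defining local rules, and their seeds, coincide.

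First I would recall from \cite{DLP} the precise shape of that parametrisation. There is a strictly increasing function $\epsilon$ on the dyadic rationals $\mathbb Z[1/2]$ whose image is exactly $\mathfrak E$, together with a rank function $r$, determined by $\epsilon(n)=n$, $r(n)=1$ for $n\in\mathbb Z$ and the following rule at dyadic midpoints: if $\alpha<\beta$ are the endpoints of a dyadic interval and $\gamma=\frac{\alpha+\beta}{2}$ its midpoint, then
\[ \epsilon(\gamma)=\frac{\epsilon(\alpha)+\epsilon(\beta)}{2}+\frac{\Delta(\beta)-\Delta(\alpha)}{3+\epsilon(\alpha)-\epsilon(\beta)},\qquad \Delta(x)=\frac12\left(1-\frac{1}{r(x)^2}\right), \]
where $r(\gamma)$ is fixed by requiring $r(\alpha),r(\beta),r(\gamma)$ to be a Markov triple, and where the compatibility relation $\epsilon(\beta)-\epsilon(\alpha)=3-\frac{r(\gamma)}{r(\alpha)r(\beta)}$ holds. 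Since tensoring with $\mathcal O(1)$ and dualising act on slopes by $\mu\mapsto\mu+1$ and $\mu\mapsto-\mu$, the set $\mathfrak E$ is invariant under $\mathit{Aff}_1(\mathbb Z)$; as $\mathcal{MF}$ is by definition \mref{MF} the $\mathit{Aff}_1(\mathbb Z)$-orbit of $\mathcal{MF}_R=\mathcal{MF}\cap[0,1/2]$, the theorem reduces to the identity $\mathfrak E\cap[0,1/2]=\mathcal{MF}_R$, i.e.\ (using that $\epsilon$ is increasing) to the statement that $\epsilon$ restricted to $[0,1/2]\cap\mathbb Z[1/2]$ has image $\mathcal{MF}_R$.

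The core is then a local computation identifying the two rules. Subdivided from its endpoints $0$ and $1/2$, the set of dyadic rationals of $[0,1/2]$ carries the structure of the infinite binary tree, the same tree underlying $\mathcal{MF}_R$; under this subdivision $\epsilon$ starts from the values $\frac01$ and $\frac12$ with ranks $1$ and $2$, which are exactly the seeds $\frac01,\frac12$ of the Markov fraction tree. So it suffices to prove the inductive step: if $\epsilon(\alpha)=\frac{p_1}{q_1}$ and $\epsilon(\beta)=\frac{p_2}{q_2}$ are in lowest terms with $q_1=r(\alpha)$, $q_2=r(\beta)$, then $\epsilon(\gamma)$ equals the Springborn mediant $\frac{p_1}{q_1}*\frac{p_2}{q_2}$, again in lowest terms with denominator $r(\gamma)$. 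Set $q_3:=r(\gamma)$ and $q_3':=p_2q_1-p_1q_2$. The Markov relations for the triple $\{q_1,q_2,q_3\}$ give $q_3q_3'=q_1^2+q_2^2$ and $q_3+q_3'=3q_1q_2$ (this is \mref{rel3}), so by \mref{mfrac} the reduced Springborn mediant is $\frac{p_3}{q_3}$ with $p_3q_3'=p_1q_1+p_2q_2$. Substituting $\Delta(\beta)-\Delta(\alpha)=\frac{q_2^2-q_1^2}{2q_1^2q_2^2}$ and $3+\epsilon(\alpha)-\epsilon(\beta)=\frac{q_3}{q_1q_2}$ into the Drézet--Le Potier rule yields
\[ \epsilon(\gamma)=\frac{p_1q_2+p_2q_1}{2q_1q_2}+\frac{q_2^2-q_1^2}{2q_1q_2q_3}, \]
and clearing denominators using $q_3q_3'=q_1^2+q_2^2$ reduces the desired equality $\epsilon(\gamma)=\frac{p_3}{q_3}$ to the polynomial identity $(p_1q_2+p_2q_1)(q_1^2+q_2^2)+(q_2^2-q_1^2)q_3'=2q_1q_2(p_1q_1+p_2q_2)$, which follows at once from $p_1q_2-p_2q_1=-q_3'$. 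That $p_3,q_3$ are then coprime with $q_3$ the relevant Markov number is exactly the content of the first Proposition above, so the inductive hypothesis is preserved; induction along the binary tree then gives $\epsilon\bigl([0,1/2]\cap\mathbb Z[1/2]\bigr)=\mathcal{MF}_R$, hence $\mathfrak E=\mathcal{MF}$. In particular the rank of every exceptional bundle on $\mathbb P^2$ is a Markov number, which is Rudakov's theorem.

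I expect the genuine obstacle to be organisational rather than computational: extracting from \cite{DLP} the recursion for $\epsilon$ together with the auxiliary data for the ranks and, crucially, the exact form of the slope-gap relation $\epsilon(\beta)-\epsilon(\alpha)=3-r(\gamma)/(r(\alpha)r(\beta))$ in the normalisation in which the polynomial identity above comes out cleanly. Once the right statement of the Drézet--Le Potier rule is in hand, the induction must be run simultaneously on the slope and on the rank, and the only thing that makes it close up is the first Proposition above, guaranteeing that the Springborn mediant of two reduced fractions with Markov-number denominators is again reduced with a Markov-number denominator.
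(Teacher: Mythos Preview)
Your proposal is correct and follows essentially the same route as the paper: both reduce the theorem, via the $\mathit{Aff}_1(\mathbb Z)$-symmetry, to showing that the Dr\'ezet--Le Potier recursion for $\epsilon$ coincides with the Springborn mediant on $[0,1/2]$, and both verify this by the same polynomial computation hinging on $3q_1q_2-(p_2q_1-p_1q_2)=q_3$ together with the Markov equation. The only organisational difference is that you import the slope-gap relation $\epsilon(\beta)-\epsilon(\alpha)=3-r(\gamma)/(r(\alpha)r(\beta))$ and the Markov-triple property of the ranks as auxiliary inputs from \cite{DLP}, whereas the paper obtains the equivalent relation directly from Lemma~2.2 (relation \mref{rel3}) via the induction hypothesis, making its argument marginally more self-contained; the core identity and its verification are the same.
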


\begin{proof}

One can prove this using the results of Rudakov \cite{Rud}, but we derive it directly from the results of Dr\`ezet and Le Potier \cite{DLP}.

Since the set of exceptional bundles is invariant under tensor product by $\mathcal O(n)$ and taking the dual, the corresponding set of possible slopes $\mathfrak E$ is invariant under the integer affine group $Aff_1(\mathbb Z)$. This means that it is enough to describe the intersection $\mathfrak E \cap [0,1/2]$, which we claim to be the set $\mathcal MF$ of Markov fractions from $[0,1/2]$ (see Fig. 1).

Dr\`ezet and Le Potier \cite{DLP} described the set $\mathfrak E$ as the image of a special function $\epsilon: \mathfrak D \to \mathbb Q$, where $\mathfrak D$ is the set of dyadic (binary) rationals $m/2^n$.
This function has the properties $$ \epsilon(-x)=-\epsilon(x), \quad \epsilon(x+n)=\epsilon(x)+n, \, n\in \mathbb Z$$ and is uniquely defined by the condition that, if $\epsilon\left(\frac{m}{2^{n}}\right)=\frac{p_1}{q_1}, \,\,\, \epsilon\left(\frac{m+1}{2^{n}}\right)=\frac{p_2}{q_2},$ then
\beq{epsil}
\frac{p_3}{q_3}:=\epsilon\left(\frac{2m+1}{2^{n+1}}\right)=\frac{1}{2}\left(\frac{p_1}{q_1}+ \frac{p_2}{q_2} +\frac{q_1^{-2}-q_2^{-2}}{p_1/q_1-p_2/q_2+3}\right).
\eeq

The key observation now is that the Dr\`ezet-Le Potier defining relation (\ref{epsil}) is equivalent to the Springborn mediant rule (\ref{Sm}).
Indeed, we claim that for the neighbouring Markov fractions $p_1/q_1<p_2/q_2$ we have the identity
\beq{iden}
\frac{1}{2}\left(\frac{p_1}{q_1}+ \frac{p_2}{q_2} +\frac{q_1^{-2}-q_2^{-2}}{p_1/q_1-p_2/q_2+3}\right)=\frac{p_1q_1+p_2q_2}{q_1^2+q_2^2}.
\eeq
First let us simplify
$$
\frac{q_1^{-2}-q_2^{-2}}{p_1/q_1-p_2/q_2+3}=\frac{q_2^2-q_1^2}{(3q_1q_2+p_1q_2-p_2q_1)q_1q_2}=\frac{q_2^2-q_1^2}{q_1q_2q_3},
$$
where we have used relation (\ref{rel3}) from Lemma 2.2. So we need to show that
$$
\frac{(p_1q_2+p_2q_1)q_3+q_2^2-q_1^2}{2q_1q_2q_3}=\frac{p_1q_1+p_2q_2}{q_1^2+q_2^2},
$$
or, equivalently, that
$$
[(p_1q_2+p_2q_1)q_3+q_2^2-q_1^2](q_1^2+q_2^2)-2q_1q_2q_3(p_1q_1+p_2q_2)=0.
$$
After simple algebra the right hand side takes the form
$$
(q_2^2-q_1^2)[(p_1q_2-p_2q_1)q_3+q_1^2+q_2^2]=(q_2^2-q_1^2)[(q_3-3q_1q_2)q_3+q_1^2+q_2^2]
$$
$$=(q_2^2-q_1^2)(q_1^2+q_2^2+q_3^2-3q_1q_2q_3)=0
$$
due to Markov equation and relation (\ref{rel3}). This proves the identity (\ref{iden}).

Thus we see that the description of the exceptional slope set by Dr\`ezet and Le Potier is equivalent to the construction of the Markov fraction tree by the Springborn rule.
Now the theorem follows from the results of \cite{DLP}.
\end{proof}

Note that as a result we have a simpler proof of the well-known result by Rudakov \cite{Rud}.

\begin{Corollary}
The ranks of the exceptional vector bundles on $\mathbb P^2$ are Markov numbers.
\end{Corollary}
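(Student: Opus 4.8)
The plan is to deduce the Corollary from the Theorem together with Proposition~2.1, the only extra ingredient being that the rank of an exceptional bundle is the denominator of its slope written in lowest terms. So the first step I would carry out is to check that, for an exceptional bundle $E$ on $\mathbb P^2$, the integers $c_1=c_1(E)$ and $r=r(E)$ are coprime. For this I would use that exceptionality forces $\chi(E,E)=\sum_i (-1)^i\dim\mathrm{Ext}^i(E,E)=1$, and apply Hirzebruch--Riemann--Roch on $\mathbb P^2$, which expresses $\chi(E,E)$ as the polynomial $r^2-2rc_2+(r-1)c_1^2$ in $r,c_1,c_2$. Setting it equal to $1$, every summand on the left except $(r-1)c_1^2$ is a multiple of $r$, and that last summand is a multiple of $c_1$; hence $\gcd(c_1,r)$ divides $1$. (One could instead simply quote the coprimality from \cite{DLP} or \cite{Rud}, but the short computation is what keeps the argument self-contained.) Consequently $\mu(E)=c_1/r$ is already in lowest terms and $r$ is exactly its denominator.

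The second step is to feed this into the Theorem. Both the rank $r(E)$ and the reduced denominator of $\mu(E)$ are unchanged when $E$ is replaced by $E\otimes\mathcal O(n)$ (which sends $(r,c_1)$ to $(r,c_1+nr)$) or by the dual $E^\vee$ (which sends $(r,c_1)$ to $(r,-c_1)$), while $\mu(E)$ is moved by the corresponding element of $\mathit{Aff}_1(\mathbb Z)$; so I may assume $\mu(E)\in[0,1/2]$. By the Theorem, $\mu(E)$ is then a Markov fraction sitting on the Markov fraction tree of Section~2, and by Proposition~2.1 the denominator of such a fraction is a Markov number. Since that denominator equals $r(E)$, the rank of $E$ is a Markov number.

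I do not expect any serious obstacle: the substantive content is entirely in the Theorem, and the one genuinely new point -- the coprimality of $c_1$ and $r$ -- is disposed of by the one-line Riemann--Roch computation above, which is precisely why this yields a shorter route to Rudakov's result. For completeness I would also record the converse: by \cite{DLP} every fraction on the Markov fraction tree is realised as an exceptional slope, and by Proposition~2.1 the denominators of those fractions exhaust the Markov numbers (already $\mathcal O$ and the twists of $T\mathbb P^2$ account for the ranks $1$ and $2$), so the set of ranks of exceptional bundles on $\mathbb P^2$ is exactly the set of Markov numbers.
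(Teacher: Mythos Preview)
Your argument is correct and follows the same route the paper intends: the Corollary is stated as an immediate consequence of the Theorem together with Proposition~2.1, and that is exactly what you do. The one point you add is the verification, via $\chi(E,E)=r^2-2rc_2+(r-1)c_1^2=1$, that $\gcd(c_1,r)=1$, so that $r(E)$ really is the reduced denominator of $\mu(E)$; the paper does not spell this out in the proof of the Corollary, though the same Riemann--Roch computation (yielding $p^2+1\equiv 0\pmod q$) appears later in Section~5, so your addition is consonant with the paper rather than a different approach.
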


\section{Extension to real numbers and Minkowski question mark function}

We have two functions on $\mathbb Q\cap [0,1]$ with the same value set $\mathcal{MF}$ : the Dr\`ezet-Le Potier function $\epsilon(x)$ and the Springborn function $\mu(x).$
They have the properties which can be written in terms of Farey and Sprinborn mediants as
\beq{epsil1}
\epsilon\left(\frac{m}{2^{n}}\oplus\frac{m+1}{2^{n}}\right)=\epsilon\left(\frac{m}{2^{n}}\right)*\epsilon\left(\frac{m+1}{2^{n}}\right),
\eeq
\beq{mupro}
\mu\left(\frac{a}{b}\oplus\frac{c}{d}\right)=\mu\left(\frac{a}{b}\right)*\mu\left(\frac{c}{d}\right), \quad |ad-bc|=1.
\eeq
A natural question is to consider and study the extension of these monotonically increasing functions to real $x \in [0,1].$ 

It is natural to compare these functions with the {\it question mark function} $?(x)$ introduced by Minkowski \cite{Mink} and studied later by Denjoy and by Salem (see more details in \cite{ViaParadis}). It
can be uniquely defined by the properties: $?(0)=0, \,\, ?(1)=1$ and, if $\frac{a}{b}$ and $\frac{c}{d}$ are neighbours on the Farey tree, then
\beq{deffar}
?\left(\frac{a}{b}\oplus\frac{c}{d}\right)=\frac{1}{2}\left(?\left(\frac{a}{b}\right)+?\left(\frac{c}{d}\right)\right), \quad |ad-bc|=1
\eeq
(so the corresponding mediant is just the aritmetic mean).

Salem  \cite{Salem} gave an equivalent definition of $?(x)$ in terms of the continued fraction expansion of $x = [0, a_1, a_2, \dots, a_n, \dots]:$
\begin{equation}
\label{Salem}
?(x) = \frac{1}{2^{a_1 - 1}} - \frac{1}{2^{a_1 + a_2 - 1}} + \frac{1}{2^{a_1 + a_2 + a_3 - 1}} - \dots .
\end{equation}
Spalding and the author \cite{SV1} used this to describe this function on the Conway topograph. 
Namely, if $\gamma_x$ is a path on the Farey tree leading to $x \in [0,1]$ (for irrational, infinite), then
$$
?(x)= [0. a_{1} a_{2} \dots a_{j} \dots]_2, 
$$
where the binary digits $a_j$ equal to $0$ or $1$ depending whether the $j$-th step of $\gamma_x$ is a right turn, or left turn (see Section 6 in \cite{SV1}).

 The Minkowski question mark function can be extended continuously to $[0,1],$ where it has  the following properties (see \cite{ViaParadis}):
\begin{itemize}
\item $?(x)$ has finite binary representation (dyadic rational) iff $x$ is rational
\item $x$ is a quadratic irrational iff $?(x)$ is rational, but not dyadic rational
\item $?(x)$ is strictly increasing and defines a homeomorphism of $[0,1]$ to itself
\item The derivative $?'(x) = 0$ almost everywhere.
\end{itemize}

In contrast to the Minkowski function, the extension of both functions $\epsilon(x)$ and $\mu(x)$ are discontinuous at all rational $x.$ Indeed, Springborn proved that the interval around any Markov fraction $\frac{p}{q}$
\beq{int}
I=\left[\frac{p}{q}-\frac{1}{2}l(q), \, \frac{p}{q}+\frac{1}{2}l(q),\right],\quad l(q)=3-\frac{\sqrt{9q^2-4}}{q}
\eeq
 is the maximal interval free of other Markov fractions, 
 so at each rational $x$ with $\mu(x)=\frac{p}{q}$ the function $\mu$  has jump $l(q)$.
For example, for $x=1/2$ the interval $I=\left[\frac{-11+\sqrt{221}}{10}, \frac{19-\sqrt{221}}{10}\right]$ contains the only Markov fraction $2/5$ and $\mu(x)$ has at $x=1/2$ the jump $(15-\sqrt{221})/10$.

Note that if we know all the jumps of a monotonic function $f(x)$ then we can define the so-called {\it saltus function} $s_f(x)$ such that $f(x)=s_f(x)+g(x)$ with a monotonic continuous $g(x)$ (see \cite{RS}, p. 14-15). Let $H(x)$ be the {\it Heaviside step function}:  
$$
    H(x) =
\begin{cases}
0, \,\, x<0,\\
1/2, \,\, x=0,\\
1, \,\, x>0.
\end{cases} 
$$

\begin{prop}
Springborn function coincides with its saltus function:
\beq{saltus}
\mu(x)=s_\mu(x):=-\frac{1}{2}l(1)+\sum_{a/b \in \mathbb Q\cap [0,1]}l\left(q\left(\frac{a}{b}\right)\right)H\left(x-\frac{a}{b}\right), 
\eeq
where $q(\frac{a}{b})$ is the Markov number, corresponding to $a/b$.
\end{prop}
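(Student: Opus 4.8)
The plan is to use the classical splitting of a monotone function into a continuous part and a jump (saltus) part (\cite{RS}, p.~14--15), and to show that the continuous part of $\mu$ vanishes identically. Since $\mu$ is monotone non-decreasing on $[0,1]$, the sum of its jumps is at most $\mu(1)-\mu(0)=\tfrac12<\infty$, so $\mu$ splits as $\mu(x)=g(x)+s_\mu(x)$ with $g$ continuous and non-decreasing and $s_\mu$ the saltus function built from the jumps of $\mu$. By Springborn's maximal-interval result $(\ref{int})$, the jumps sit precisely at the rationals $a/b\in\mathbb Q\cap[0,1]$: if $\mu(a/b)=p/q$, then, because the interval $I(p/q)$ of $(\ref{int})$ is the maximal one free of \emph{other} Markov fractions (so Markov fractions accumulate at both its endpoints), the one-sided limits are $\mu\bigl((a/b)^-\bigr)=p/q-\tfrac12 l(q)$ and $\mu\bigl((a/b)^+\bigr)=p/q+\tfrac12 l(q)$, the jump has size $l(q)$, and the value $p/q$ sits at its midpoint; at the two endpoints $x=0$ ($\mu(0)=0$, $q=1$) and $x=1$ ($\mu(1)=\tfrac12$, $q=2$) only half of such a jump is visible. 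A short bookkeeping check then identifies the function in $(\ref{saltus})$ as exactly this $s_\mu$: the convention $H(0)=\tfrac12$ encodes that $p/q$ is the midpoint of its jump, and the term $-\tfrac12 l(1)$ is the normalization forcing $s_\mu(0)=0=\mu(0)$, so $g(0)=0$. As $g$ is non-decreasing with $g(0)=0$, the proposition is equivalent to $g(1)=0$, i.e.\ to the claim that the total size of the jumps of $\mu$ equals $\mu(1)-\mu(0)=\tfrac12$.

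The next step is to read this sum geometrically. Two intervals $I(p_1/q_1)$ and $I(p_2/q_2)$ of the form $(\ref{int})$ around distinct Markov fractions overlap in at most a point: each is the \emph{maximal} interval free of other Markov fractions, so its endpoints are accumulation points of Markov fractions and hence cannot be interior to the other interval. Therefore
$$\textstyle\sum(\text{jumps of }\mu)=\Bigl|\bigcup_{p/q\in\mathcal{MF}_R}\bigl(I(p/q)\cap[0,\tfrac12]\bigr)\Bigr|,$$
and the proposition reduces to: the Springborn intervals $I(p/q)$ cover $[0,1/2]$ up to a set of Lebesgue measure zero. Writing $\Lambda:=[0,\tfrac12]\setminus\bigcup_{p/q}\bigl(p/q-\tfrac12 l(q),\,p/q+\tfrac12 l(q)\bigr)$, what has to be shown is $|\Lambda|=0$. (Consistency check: $\Lambda\cup\mathcal{MF}_R$ is exactly the image of the monotone extension of $\mu$ to $[0,1]$, so $|\Lambda|=0$ is equivalent to the statement that this extension has no nontrivial continuous part.)

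The remaining point $|\Lambda|=0$ is the crux, and is where essentially all of the content lies. The set $\Lambda$ is closed and nowhere dense, since the Markov fractions are dense in $[0,1/2]$; apart from the countably many endpoints $p/q\pm\tfrac12 l(q)$ (quadratic irrationals, with $l(q)=3-\sqrt{9q^2-4}/q$), every point of $\Lambda$ is a two-sided accumulation point of Markov fractions whose denominators tend to infinity. By Springborn's geodesic description (Corollary~4.3 of \cite{Springborn} and its proof), these points are precisely the endpoints of the complete simple geodesics on the modular torus that do not run into the cusp at both ends, and such sets of endpoints have Lebesgue measure zero --- this is a classical theorem of Birman and Series, and at the level of continued fractions it reflects that these numbers are badly approximable, whereas by the Borel--Bernstein theorem almost every real number has unbounded partial quotients. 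Hence $|\Lambda|=0$, so $g(1)=0$, and since $g$ is non-decreasing with $g(0)=0$ we conclude $g\equiv0$, i.e.\ $\mu=s_\mu$. The only genuinely non-elementary ingredient is this measure-zero statement; if one prefers, it can be quoted directly from Springborn \cite{Springborn} (or, on the exceptional-slope side, from Dr\`ezet--Le Potier \cite{DLP}), since it amounts to the assertion that the Markov fractions, thickened by the intervals $(\ref{int})$, exhaust $[0,1]$ up to measure zero.
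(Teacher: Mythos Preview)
Your argument is correct, and in fact it amounts to an unpacking of the paper's own one-line proof. The paper simply invokes the McShane identity
\[
\tfrac{1}{2}\bigl(l(1)+l(2)\bigr)+\sum_{q>2} l(q)=\tfrac{1}{2},
\]
which is exactly your statement that the total jump of $\mu$ on $[0,1]$ equals $\mu(1)-\mu(0)$, hence $g\equiv0$. You instead reduce this to the measure-zero claim $|\Lambda|=0$ and then appeal to Birman--Series (or a bad-approximability argument). That is precisely the route McShane himself took to prove his identity: the gaps $I(p/q)$ are the ``gaps'' in the Birman--Series set of simple-geodesic endpoints on the boundary, and the identity is the statement that these gaps exhaust the circle up to measure zero. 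So your proof and the paper's are the same at bottom; the paper quotes the packaged result, while you reassemble it from its ingredients.

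Two small remarks. First, your Borel--Bernstein alternative is a little quick: Lagrange number $3$ is a $\liminf$ condition and does not by itself force bounded partial quotients, so the clean route really is Birman--Series (equivalently, McShane). Second, your suggestion to quote the measure-zero statement ``directly from Springborn or Dr\`ezet--Le Potier'' is not quite right: neither reference proves this; the correct citation is McShane \cite{McShane}, as the paper does.
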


The proof follows from the {\it McShane identity} \cite{McShane}, which can be written as \footnote{I am grateful to Boris Springborn for explaining this to me.}
\beq{mcs}
\frac{1}{2}(l(1)+l(2))+\sum_{q \in \mathcal M, \, q>2} l(q)=\frac{1}{2},
\eeq
where $\mathcal M$ is the set of all Markov numbers.

\begin{Corollary}
The derivative $\mu'(x)=0$ almost everywhere.
\end{Corollary}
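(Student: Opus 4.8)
The plan is to deduce the Corollary directly from the preceding Proposition. Once $\mu$ is identified with its saltus function $s_\mu$ through \mref{saltus}, the assertion $\mu'=0$ almost everywhere reduces to the classical fact that a pure jump function on an interval has vanishing derivative almost everywhere. This is the exact analogue of the property of the Minkowski question mark function quoted above, and is in fact slightly easier here, since $\mu$ coincides with its own saltus function and therefore has no continuous part whatsoever.

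In detail, I would argue as follows. By \mref{saltus}, on $[0,1]$ the function $\mu$ has the form $\mu(x)=-\tfrac12 l(1)+\sum_{n\ge 1}c_nH(x-r_n)$, where $\{r_n\}_{n\ge 1}$ is an enumeration of $\mathbb{Q}\cap[0,1]$ and $c_n=l\bigl(q(r_n)\bigr)>0$. The total jump $\sum_n c_n$ is finite --- indeed equal to $\tfrac12$ by the McShane identity \mref{mcs}, and in any case automatically finite since $\mu$ is bounded and monotone --- so the series converges and $\mu$ is a bounded, monotonically increasing function on $[0,1]$. By Lebesgue's theorem on differentiation of monotone functions, $\mu'(x)$ exists for almost every $x\in[0,1]$, is measurable and nonnegative, and satisfies $\int_0^1\mu'(x)\,dx\le \mu(1)-\mu(0)$.

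Now, for each $N$ write $\mu=s_N+g_N$ with $s_N(x):=\sum_{n\le N}c_nH(x-r_n)$ a finite step function and $g_N(x):=-\tfrac12 l(1)+\sum_{n>N}c_nH(x-r_n)$, which is again monotonically increasing with total increase on $[0,1]$ bounded by $\sum_{n>N}c_n$. Since $s_N'=0$ almost everywhere, one has $\mu'=g_N'$ almost everywhere, and Lebesgue's inequality applied to $g_N$ gives
\[ 0\le\int_0^1\mu'(x)\,dx=\int_0^1 g_N'(x)\,dx\le\sum_{n>N}c_n\longrightarrow 0\quad(N\to\infty). \]
Hence $\int_0^1\mu'(x)\,dx=0$, and together with $\mu'\ge 0$ almost everywhere this forces $\mu'(x)=0$ for almost every $x\in[0,1]$. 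Finally, by the $\mathit{Aff}_1(\mathbb{Z})$-equivariance of $\mu$ (equivalently, the $\mathit{Aff}_1(\mathbb{Z})$-invariance of $\mathcal{MF}$) the same holds on every interval, hence almost everywhere on $\mathbb{R}$.

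There is no real obstacle: the measure-theoretic content is entirely standard. The single point that genuinely needs the earlier results is that the jump decomposition \mref{saltus} captures all of $\mu$ with no residual continuous part --- this is exactly the content of the preceding Proposition --- while the finiteness $\sum_n c_n<\infty$ required for the estimate is supplied by the McShane identity \mref{mcs}. One could instead give a more combinatorial argument built on Springborn's maximal gap intervals \mref{int}, but the saltus-function route is the most economical.
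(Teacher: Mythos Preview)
Your proposal is correct and follows the same route as the paper: the Corollary is stated there without proof, as an immediate consequence of the Proposition identifying $\mu$ with its saltus function, together with the classical fact that a pure jump function has derivative zero almost everywhere. You have simply spelled out that classical measure-theoretic argument in detail; the final remark about extending to all of $\mathbb{R}$ is unnecessary since $\mu$ is only considered on $[0,1]$, but otherwise the argument is exactly what the paper has in mind.
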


Springborn  \cite{Springborn} considered also the limits of Markov fractions along the infinite paths on the Markov fraction tree leading to the map
\beq{muhat}
\hat \mu:  \mathbb R_{\geq 0}\cup \{\infty\} \rightarrow \{\textit {$PGL_2(\mathbb Z)$-classes of real numbers}\}.
\eeq
For every rational $x$ there are exactly two infinite paths approaching $x$ with two different limits of the corresponding Markov fractions, which are two $PGL_2(\mathbb Z)$-equivalent  Markov irrationalities  $$\hat\mu(x)=\frac{p}{q}\pm \frac{1}{2}l(q)=\mu(x)\pm \frac{1}{2}l(q)$$ with the Lagrange number  $L(\hat\mu(x))=\frac{\sqrt{9q^2-4}}{q} <3. $
For irrational $x$ these limits are well-defined and have the Lagrange number $L=3.$ Whether this gives all such real numbers is an interesting open question \cite{Springborn}.

There is also a related {\it Lyapunov function} $\Lambda(x)$ describing the limits of  Markov numbers along the paths on the Conway topograph:
\beq{defL}
\Lambda(\xi)=\limsup_{n\to\infty}\frac{\ln(\ln q_n(\xi))}{n}, \,\, \xi \in \mathbb{R}P^1,
\eeq
where $q_n(\xi)$ is the corresponding Markov number along the path $\gamma_\xi$ leading to $\xi$.
It was studied in \cite{SV1}, where it was proved that $\Lambda(\xi)$ is $PGL_2(\mathbb Z)$-invariant function, which vanishes almost everywhere (but has the support with Hausdorff dimension equal to 1) and has the set of values $[0, \ln \varphi],$ where $\varphi=\frac{1+\sqrt 5}{2}$ is the golden ratio.
 



\section{Concluding remarks}

The celebrated {\it Unicity conjecture} by Frobenius \cite{Aigner} claims that any Markov triple is uniquely determined by its maximal part. Springborn reformulated it as the following 

{\bf Conjecture 1.} {\it For any Markov number $q$ there exists a unique Markov fraction $0\leq \frac{p}{q}\leq \frac{1}{2}.$}

This means that one needs to prove that every Markov fraction appears on the Markov fraction tree only once, which may help to settle this conjecture.

Since the exceptional slope $\mu(E)$ determines the exceptional bundle $E$ uniquely, we can reformulate it equivalently as follows. Consider the natural action of $Aff_1(\mathbb Z)$ on the vector bundles generated by the transformations $E\to E^*$ and $E\to E\otimes \mathcal O(n)$.

{\bf Conjecture 2.} {\it Every exceptional bundle $E$ on $\mathbb P^2$ is determined by its rank uniquely modulo natural action of $Aff_1(\mathbb Z)$.}

By Proposition 2.3 for an exceptional bundle $E$ with given rank $q=r(E)$ (which is a Markov number), the Chern class $p=c_1(E)$ satisfies the quadratic congruence
$$ 
p^2+1\equiv 0 \,(mod\,\, q).
$$
For prime Markov numbers $q$ this congruence has a unique solution (up to a sign), so both conjectures hold true in this case (which is already known due to Baragar \cite{Baragar}). Although conjecturally there are infinitely many prime Markov numbers, according to Bourgain, Gamburd and Sarnak \cite{BGS},
they have density zero among all Markov numbers, so the congruence in general have many solutions. 

For example, for the Markov fraction $\frac{15571}{37666}$ with $37666=2\times 37 \times 509$ the congruence
$
x^2+1\equiv 0 \,(mod\,\, 37666)
$
has 4 solutions $x\equiv \pm 2337, \, \pm 15571$.
It would be interesting to characterise the particular solution given by the numerator of the Markov fraction in some other terms.

Note that by the Riemann-Roch theorem the second Chern class $c_2(E)$ of the exceptional bundle $E$ can be computed in terms of $p=c_1(E), \, q=r(E)$ from the formula
\beq{s}
c_2(E)=\frac{1}{2}(q-1)(s+1),
\eeq
where the number $s$ is defined by
$p^2+1=sq.$

It is interesting that the {\it Markov binary quadratic form} \cite{Markov}, corresponding to Markov fraction $\frac{p}{q}\leq \frac{1}{2}$, can be written in these terms as
\beq{Mf}
f(x,y)=qx^2+(3q-2p)xy+(s-3p)y^2
\eeq
(see Definition 2.5 in \cite{Aigner}).

A natural question would be to understand the arithmetic of the exceptional bundles on the del Pezzo surfaces studied in  \cite{KN, KO, Rud2}.
In particular, it was shown by Rudakov \cite{Rud2}  that the exceptional collections on quadrics $\mathbb P^1\times \mathbb P^1$ have ranks $(x,y,z)$ satisfying the
Diophantine equation
$$
x^2+y^2+2z^2=4xyz
$$
with all positive solutions derived by mutations from
$(1,1,1).$

Karpov and Nogin \cite{KN} generalised this to other del Pezzo surfaces known to be isomorphic to either $\mathbb P^1\times \mathbb P^1$, or to $X_m$ being the plane $\mathbb P^2$ blown up in $m$ generic points with $0\leq m \leq 8.$ In particular, for $X_3$ we have the
Diophantine equation
$$
x^2+2y^2+3z^2=6xyz
$$
with all positive solutions derived by mutations from
$(1,1,1).$ 

A natural question is to describe the exceptional slopes in the del Pezzo cases. The work  \cite{Rosen} by Rosenberger might be helpful here.

Finally, I would like to mention the {\it Dubrovin conjecture} \cite{D2} claiming that for any Fano variety $X$ the derived category $D^b(X)$ has a full exceptional collection if and only if the quantum cohomology ring of $X$ is semisimple (see the details and latest development in \cite{MM}).

\section{Acknowledgements}

I am very grateful to Boris Springborn for attracting my attention to his remarkable paper \cite{Springborn} and to Alexander Kuznetsov, Dmitri Orlov and Artie Prendergast-Smith for very helpful discussions of the algebro-geometric aspects of the paper.

\end{document}